\begin{document}

 \newtheorem{theorem}{Theorem}[section]
 \newtheorem{corollary}[theorem]{Corollary}
 \newtheorem{lemma}[theorem]{Lemma}{\rm}
 \newtheorem{proposition}[theorem]{Proposition}

 \newtheorem{defn}[theorem]{Definition}{\rm}
 \newtheorem{assumption}[theorem]{Assumption}
 \newtheorem{remark}[theorem]{Remark}
 \newtheorem{ex}{Example}
\numberwithin{equation}{section}
\def\la{\langle}
\def\ra{\rangle}
\def\glexe{\leq_{gl}\,}
\def\glex{<_{gl}\,}
\def\e{{\rm e}}

\def\fac{{\rm !}}
\def\x{\mathbf{x}}
\def\P{\mathbb{P}}
\def\S{\mathbf{S}}
\def\h{\mathbf{h}}
\def\m{\mathbf{m}}
\def\y{\mathbf{y}}
\def\bz{\mathbf{z}}
\def\F{\mathcal{F}}
\def\R{\mathbb{R}}
\def\T{\mathbf{T}}
\def\N{\mathbb{N}}
\def\D{\mathbf{D}}
\def\V{\mathbf{V}}
\def\U{\mathbf{U}}
\def\K{\mathbf{K}}
\def\Q{\mathbf{Q}}
\def\M{\mathbf{M}}
\def\oM{\overline{\mathbf{M}}}
\def\O{\mathbf{O}}
\def\C{\mathbb{C}}
\def\P{\mathbb{P}}
\def\Z{\mathbb{Z}}
\def\bZ{\mathbf{Z}}
\def\H{\mathcal{H}}
\def\A{\mathbf{A}}
\def\V{\mathbf{V}}
\def\AA{\overline{\mathbf{A}}}
\def\B{\mathbf{B}}
\def\c{\mathbf{C}}
\def\L{\mathcal{L}}
\def\bS{\mathbf{S}}
\def\H{\mathcal{H}}
\def\I{\mathbf{I}}
\def\Y{\mathbf{Y}}
\def\X{\mathbf{X}}
\def\cX{\mathbf{X}}
\def\G{\mathbf{G}}
\def\f{\mathbf{f}}
\def\z{\mathbf{z}}
\def\v{\mathbf{v}}
\def\y{\mathbf{y}}
\def\d{\hat{d}}
\def\x{\mathbf{x}}
\def\bI{\mathbf{I}}
\def\y{\mathbf{y}}
\def\g{\mathbf{g}}
\def\w{\mathbf{w}}
\def\b{\mathbf{b}}
\def\a{\mathbf{a}}
\def\p{\mathbf{p}}
\def\u{\mathbf{u}}
\def\bv{\mathbf{v}}
\def\q{\mathbf{q}}
\def\e{\mathbf{e}}
\def\s{\mathcal{S}}
\def\cc{\mathcal{C}}
\def\co{{\rm co}\,}
\def\tg{\tilde{g}}
\def\tx{\tilde{\x}}
\def\tg{\tilde{g}}
\def\tA{\tilde{\A}}
\def\bell{\boldsymbol{\ell}}
\def\bxi{\boldsymbol{\xi}}
\def\balpha{\boldsymbol{\alpha}}
\def\bbeta{\boldsymbol{\beta}}
\def\bgamma{\boldsymbol{\gamma}}
\def\bpsi{\boldsymbol{\psi}}
\def\supmu{{\rm supp}\,\mu}
\def\supp{{\rm supp}\,}
\def\cd{\mathcal{C}_d}
\def\cok{\mathcal{C}_{\K}}
\def\cop{COP}
\def\vol{{\rm vol}\,}
\def\om{\mathbf{\Omega}}
\def\f{\mathscr{F}}
\def\la{\langle\langle}
\def\ra{\rangle\rangle}
\def\blambda{\boldsymbol{\lambda}}
\def\btheta{\boldsymbol{\theta}}
\def\bphi{\boldsymbol{\phi}}
\def\bpsi{\boldsymbol{\psi}}
\def\bgamma{\boldsymbol{\gamma}}
\def\eeta{\boldsymbol{\eta}}
\def\bnu{\boldsymbol{\nu}}
\def\bom{\boldsymbol{\Omega}}
\def\fac{{\rm !}}
\def\tM{\hat{\M}}
\def\tv{\hat{\v}}

\title{A disintegration of the Christoffel function}
%\title[Classification]{On the Christoffel function and classification in data analysis}
\thanks{Research supported by the AI Interdisciplinary Institute ANITI  funding through the french program
\emph{Investing for the Future PI3A} under the grant agreement number ANR-19-PI3A-0004. The author is also affiliated with IPAL-CNRS laboratory, Singapore.}

\author{Jean B. Lasserre}
\address{LAAS-CNRS and Institute of Mathematics\\
University of Toulouse\\
LAAS, 7 avenue du Colonel Roche\\
31077 Toulouse C\'edex 4, France\\
Tel: +33561336415}
\email{lasserre@laas.fr}

\date{}

\begin{abstract} 
 We show that the Christoffel function (CF) factorizes (or can be disintegrated) as the product of two Christoffel functions, one associated with the marginal and the another related to the conditional distribution, in the spirit of 
 \emph{``the CF of the disintegration is the disintegration of the CFs"}. In the proof one uses an apparently overlooked 
  property (but interesting in its own) which states that any sum-of-squares polynomial is the Christoffel function of some linear form
 (with a representing measure in the univariate case). The same is true for the convex cone of polynomials that are positive
 on a basic semi-algebraic set. This interpretation of the CF establishes another bridge between polynomials optimization and orthogonal polynomials.
\end{abstract}
\maketitle
%% Abstract in the other language
%\begin{altabstract} 
% Nous montrons que la fonction de Christoffel (CF) se factorise en le produit de deux fonctions de Christoffel dont une est celle de la marginale et l'autre est li\'ee \`a la probabilit\'e conditionnelle. La d\'emonstration utilise une propri\'et\'e 
% apparemment ignor\'ee (mais int\'eressante en soi), qui stipule que tout polyn\^ome qui est somme de carr\'es est aussi la fonction de Christoffel  d'une forme lin\'eaire (repr\'esent\'ee par une mesure dans le cas univari\'e). The same is true for the convex cone of polynomials positive on a basic semi-algebraic set.
% Cette interpr\'etation de la fonction de Christoffel fournit un pont suppl\'ementaire 
% entre l'optimisation polynomiale et les polyn\^omes orthogonaux.
%\end{altabstract}

% Example of section
\section{Introduction}
It is well-known that a probability measure $\mu$ on a Cartesian product $\X\times\Y\subset\R^n\times\R^p$ of Borel spaces,
\emph{disintegrates} into $\hat{\mu}(d\y\,\vert\,\x)\,\phi(d\x)$ with 
its marginal $\phi(d\x)$ on $\X$ and its conditional probability $\hat{\mu}(dy\,\vert\,\x)$
on $\Y$, given $\x\in\X$. That is:
\begin{equation}
\label{eq-0}
\mu(\A\times\B)\,=\,\int_{\X\cap\A}\hat{\mu}(\B\,\vert\,\x)\,\phi(d\x)\,,\quad\forall \A\in\mathcal{B}(\R^n)\,,\B\in\mathcal{B}(\R^p)\,.\end{equation}
The goal of this note is to provide a similar disintegration (or factorization) for 
the family of its Christoffel functions $(\x,\y)\mapsto \Lambda^\mu_t(\x,\y)$, $t\in\N$.

\paragraph{Contribution} Our contribution is twofold. 

(i) Consider a probability measure $\mu$ on a compact subset 
$\bom:=\X\times \Y\subset \R^n\times \R$ and let $\Lambda^\mu_t:\R^{n+1}\to\R_+$ be its associated Christoffel function, i.e.,
with $\N^{n}_t=\{\,\bbeta\in\N^{n}: \vert\bbeta\vert\leq t\,\}$, 
\[(\x,y)\,\mapsto\quad \Lambda^\mu_t(\x,y)\,:=\,\sum_{(\balpha,j)\in\N^{n+1}_t}P_{\balpha,j}(\x,y)^2\,,\quad\forall (\x,y)\in\R^n\times\R\,,\]
where $(P_{\balpha,j})_{(\balpha,j)\in\N^{n+1}}\subset\R[\x,y]$ is a family of orthonormal polynomials with respect to (w.r.t.) $\mu$.
%In fact for reasons that will be transparent later, we instead prefer to consider its slight variant %$\hat{\Lambda}^\mu_t$ defined by
%\[(\x,y)\,\mapsto\quad \hat{\Lambda}^\mu_t(\x,y)\,:=\,\sum_{(\balpha,j)\in\N^{n}_t\times\N_t}P_{\balpha,j}(\x,y)^2\,,\quad\forall (\x,y)\in\R^n\times\R\,,\]
%that is, in the above sum we now consider all orthonormal polynomials $(P_{\balpha,j})\subset\R[\x,y]_{t,t}$ (instead of $\R[\x,y]_t$), where
%$\R[\x,y]_{t,t}$ denotes the space of polynomials of total degree at most $t$ in $\x$, and total degree at most $t$ in $y$.

Our main result states that $\Lambda^\mu_t$ \emph{disintegrates} (or \emph{factorizes}) into 
\begin{equation}
\label{eq-1}
\Lambda^\mu_t(\x,y)\,=\,\Lambda^{\phi}_t(\x)\cdot\Lambda^{\hat{\nu}_{\x,t}}_t(y)\,,\quad\forall (\x,y)\in \R^{n+1}\,,\:\forall t\in\N\,,
\end{equation}
where $\Lambda^{\phi}_t$ (resp. $\Lambda^{\hat{\nu}_{\x,t}}_t$) is the Christoffel function of the marginal $\phi$ of $\mu$ on $\X$
(resp. of some probability measure $\nu_{\x,t}$ on $\R$, given $\x\in\X$). 
Moreover, for every fixed $\x\in\X$,
one can compute explicitly the Hankel moment matrix of the measure $\nu_{\x,t}$ by solving a single convex optimization problem on positive definite matrices 
with $\log\,\mathrm{det}(\cdot)$ as objective function.

Notice how \eqref{eq-1} mimics the disintegration \eqref{eq-0}. 
Indeed, as we should expect from the disintegration \eqref{eq-1}, it turns out that for each fixed $\x\in\X$, the family $(\Lambda^{\nu_{\x,t}}_t)_{t\in\N}$ 
shares asymptotic properties of the Christoffel function $\Lambda^{\hat{\mu}}_t(y)$
of  the conditional probability $\hat{\mu}(dy\,\vert\,\x)$ on $\Y$, given $\x\in\X$.

Actually, the same disintegration \eqref{eq-1} holds if the conditioning is multivariate, i.e.,
on $\y\in\R^p$ given $\x\in\R^n$, with $p>1$. The only difference is that now 
$\nu_{\x,t}$ is a linear functional on $\R[\y]_t$
not necessarily represented by a probability measure on $\R^p$.

(ii) Interestingly, the technique of proof relies on a certain one-to-one mapping between interiors of the convex cone of sum-of-squares polynomials and its dual cone of moment matrices.  In particular, and as a by-product, it implies the following simple but apparently unnoticed result that every sum-of-squares polynomial is the reciprocal of a Christoffel function of some linear functional (guaranteed to have a representing measure in the univariate case).

\section{Notation, definitions and preliminary results}
\subsection{Notation and definitions}
Let $\R[\x]$ denote the ring of real polynomials in the variables $\x=(x_1,\ldots,x_n)$ and $\R[\x]_t\subset\R[\x]$ be its subset 
of polynomials of total degree at most $t$. Let $\N^n_t:=\{\balpha\in\N^n:\vert\balpha\vert\leq t\}$
(where $\vert\balpha\vert=\sum_i\alpha_i$) with cardinal $s_n(t)={n+t\choose n}$. Let $\bv_t(\x)=(\x^{\balpha})_{\balpha\in\N^n_t}$ 
be the vector of monomials up to degree $t$. 
%We also denote by $\R[\x,\y]_{t,t}\subset\R[\x,\y]_t$ the space of polynomials of degree at most $t$ in the variables $\x$, and of degree at most in the variables $\y$. 

Let $\Sigma[\x]_t\subset\R[\x]_{2t}$ be the convex cone of polynomials of total degree at most $2t$ which are sum-of-squares (in short SOS).
For a real symmetric matrix 
$\A=\A^T$ the notation $\A\succeq0$ (resp. $\A\succ0$) stands for $\A$ is positive semidefinite (p.s.d.) (resp. positive definite (p.d.)).
The support of a Borel measure $\mu$ on $\R^n$ is the smallest closed set $A$ such that
$\mu(\R^n\setminus A)=0$, and such a  set $A$ is unique.

\paragraph{Riesz functional} 
With a real sequence $\bphi=(\phi_{\balpha})_{\balpha\in\N^n}$ is associated 
the \emph{Riesz} linear functional
 $L_{\bphi}:\R[\x]\to\R$ defined by:
\[p\:(=\sum_{\balpha\in\N^n} p_{\balpha}\,\x^{\balpha}\,)\quad \mapsto L_{\bphi}(p)\,:=\,\sum_{\balpha\in\N^n}p_{\balpha}\,\phi_{\balpha}\,,\quad \forall p\in\R[\x]\,.\]
A sequence $\bphi=(\phi_{\balpha})_{\balpha}$ has a \emph{representing measure} if and only if there exists a  Borel measure $\phi$ on $\R^n$ such that $\int \x^{\balpha}\,d\phi=\phi_{\balpha}$, for all $\balpha\in\N^n$.

\paragraph{Moment matrix}
%Let $\bphi$ be a real sequenceBorel measure whose support $\bom\subset\R^n$ is compact with nonempty interior. Its 
With a real sequence $\bphi=(\phi_{\balpha})_{\balpha\in\N^n}$ is associated 
its \emph{moment matrix} $\M_t(\bphi)$ of order (or degree) $t$. It is  a
real symmetric matrix 
with rows and columns indexed by $\N^n_t$, and with entries
\[\M_t(\bphi)(\balpha,\bbeta)\,:=\,L_{\bphi}(\x^{\balpha+\bbeta})\,=\,\phi_{\balpha+\bbeta}\,,\quad\balpha,\bbeta\in\N^n_t\,.\]
Importantly, $\M_t(\bphi)$ depends only on moments $\phi_{\balpha}$ with $\vert\balpha\vert\leq 2t$.
If $\bphi$ has a representing measure $\phi$ then we also write $\M_t(\phi)$ and
necessarily $\M_t(\phi)$ is p.s.d. for all $t$, i.e., $\M_t(\phi)\succeq0$ for all $t$.

\paragraph{Christoffel function} 
Let $\bphi=(\phi_{\balpha})_{\balpha\in\N^n}$ be such that $\M_t(\bphi)\succ0$ for all $t$, and
let $(P_{\balpha})_{\balpha\in\N^n}\subset\R[\x]$ be a family of polynomials, orthonormal with respect to $\bphi$, i.e.,
%\[\int_{\bom}P_{\balpha}\,P_{\bbeta}\,d\phi\,=\,\delta_{\balpha=\bbeta}\,,\quad\forall \balpha,\bbeta\in\N^n\,.\]
\[L_{\bphi}(P_{\balpha}\,P_{\bbeta})\,=\,\delta_{\balpha=\bbeta}\,,\quad\forall \balpha,\bbeta\in\N^n\,.\]
Then the 
Christoffel function (CF) $\Lambda^{\bphi}_t:\R^n\to\R_+$  associated with $\bphi$, is defined by
\begin{equation}
\label{def-christo-1}
\x\mapsto \Lambda^{\bphi}_t(\x)\,:=\,\left[\sum_{\balpha\in\N^n_t} P_{\balpha}(\x)^2\right]^{-1}\,,\quad\forall \x\in\R^n\,,\end{equation}
and recalling that $\M_t(\bphi)$ is nonsingular, it turns out that 
\begin{equation}
\label{def-christo-11}
\Lambda^{\bphi}_t(\x)\,=\,\left[\,\v_t(\x)^T\,\M_t(\bphi)^{-1}\,\v_t(\x)\,\right]^{-1}\,,\quad\forall \x\in\R^n\,.
\end{equation}
An equivalent and variational definition is also
\begin{equation}
\label{def-christo-2}
%\Lambda^\phi_t(\x)\,=\,\inf_{p\in\R[\x]_t}\{\,\int_{\bom} p^2\,d\phi\::\: p(\x)\,=\,1\,\}\, ,\quad\forall \x\in\R^n\,.\end{equation}
\Lambda^{\bphi}_t(\x)\,=\,\inf_{p\in\R[\x]_t}\{\,L_{\bphi}(p^2)\::\: p(\x)\,=\,1\,\}\, ,\quad\forall \x\in\R^n\,.\end{equation}
In \cite{annals-prob} the authors describe a way to obtain a family of orthonormal
polynomials w.r.t. $\bphi$ from the moment matrices $\M_t(\bphi)\succ0$ via simple determinant calculations. We will use this construction with a special ordering of the monomials 
that index the rows and columns of $\M_t(\bphi)$.

If $\bphi$ has a representing measure $\phi$ we also write its CF as $\Lambda^\phi_t$. The CF is usually defined  for measures $\phi$ on a compact set $\bom$ rather than for 
linear functionals $\bphi$ with $\M_t(\bphi)\succ0$ for all $t$. In this case 
one interesting and distinguishing feature of the CF is that as $t$ increases, $\Lambda^\phi_t(\x)\downarrow 0$ exponentially fast for every $\x$ outside the support of $\phi$. In other words, 
$\Lambda_t^\phi$ identifies the support of $\phi$ when $t$ is sufficiently large, a nice property that can be exploited for outlier 
detection in some data analysis applications; see for instance \cite{CD-2022,adv-comp}. 
In addition, at least in dimension $n=2$ or $n=3$, one may visualize this property even for small $t$, as the resulting superlevel sets 
$\bom_\gamma:=\{\,\x: \Lambda^\phi_t(\x)\geq \gamma\,\}$, $\gamma\in\R$, capture the shape of $\bom$ quite well; see e.g. \cite{neurips}.

\subsection{A specific family of orthonormal polynomials}
\label{variant}
Let $\mu$ be a Borel measure on a compact subset of $\X\times\Y\subset\R^n\times\R^p$,
and let $\M_t(\mu)$ be the moment matrix of $\mu$ with rows and columns indexed by
the monomials $(\x^{\balpha}\,\y^{\bbeta})_{(\balpha,\bbeta)\in\N^{n+p}_t}$ listed 
according to some ordering noted ``$\preceq$" between monomials, defined as follows.
First in the list, we find all monomials $(\x^{\balpha})_{\balpha\in\N^n_t}$ (i.e. all monomials
$\x^{\balpha}\,\y^{\bbeta}$ with $\vert\bbeta\vert=0$) listed e.g. according to the lexicographic ordering. Then we find all monomials $\x^{\balpha}\,\y^{\bbeta}$ with $\vert\bbeta\vert=1$,
then followed  by monomials $\x^{\balpha}\,\y^{\bbeta}$ with $\vert\bbeta\vert=2$, etc.
Below is displayed $\M_2(\mu)$ in the bivariate case $(n,p)=(1,1)$.
\begin{equation}
\label{ex1}
\M_2(\mu)\,=\,
\left[\begin{array}{cccccc}
\mu_{0,0}&\mu_{1,0}&\mu_{2,0} &\mu_{0,1}&\mu_{1,1}&\mu_{0,2}\\
\mu_{1,0}&\mu_{2,0}&\mu_{3,0} &\mu_{1,1} &\mu_{2,1}&\mu_{1,2}\\
\mu_{2,0}&\mu_{3,0}&\mu_{4,0}&\mu_{2,1}&\mu_{3,1}&\mu_{2,2}\\
\mu_{0,1}&\mu_{1,1}&\mu_{2,1}&\mu_{0,2}&\mu_{1,2}&\mu_{0,3}\\
\mu_{1,1}&\mu_{2,1}&\mu_{3,1}&\mu_{1,2}&\mu_{2,2}&\mu_{1,3}\\
\mu_{0,2}&\mu_{1,2}&\mu_{2,2}&\mu_{0,3}&\mu_{1,3}&\mu_{0,4}
\end{array}\right]\,.\end{equation}
Similarly, let $\v_t(\x,y)$ be the vector of monomials that form a basis of $\R[\x,\y]_{t}$ 
listed with the same above ordering  ``$\preceq$"; for instance with $(n,p)=(1,1)$ and $t=2$,
$\v_2(x,y)=(1,x,x^2,y,xy,y^2)$.
Then by \eqref{def-christo-11}, the Christoffel function $\Lambda^\mu_t$ is given by 
\begin{equation}
 \label{eq-4}
 (\x,\y)\,\mapsto\quad \Lambda^\mu_t(\x,\y)\,:=\,\v_t(\x,\y)^T\,\M_t(\mu)^{-1}\,\v_t(\x,\y)\,,\quad\,\forall\,(\x,\y)\in\R^n\times\R^p\,.
\end{equation}
%The difference between $\hat{\Lambda}^\mu_t$ and $\Lambda^\mu_t$ is that
%the moment matrix $\hat{\M}_t(\mu) $ contains moments up to order $4t$ whereas $\M_t(\mu) $ contains moments up to order $2t$ only. Equivalently
%\[\hat{\Lambda}^\mu_t(\x,\y)\,=\,\displaystyle\min_{p\in\L_t}\,\{\,\int p^2\,d\mu: p(\x,y)\,=\,1\,\}\,,\]
%where $\L_t=\R[\x,\y]_{t,t}$ understood as a subspace of $L^2(\mu)$ of dimension ${n+t\choose t}{p+t\choose p}$
%(instead of $V_t=\R[\x,\y]_t$ of dimension ${n+p+t\choose t}$ for $\Lambda^\mu_t$.
%
With next see that with ordering ``$\preceq$" defined above,
one we may define a certain family of orthonormal polynomials $(P_{\balpha,\bbeta})\subset\R[\x,\y]_{t}$ by following the recipe
described in \cite{annals-prob} and  that we briefly summarize:
To compute $P_{\balpha,\bbeta}\in\R[\x,\y]_{t}$ one proceeds in three steps:
\begin{itemize}
\item From $\M_t(\mu)$ extract its submatrix $S(\balpha,\bbeta)$ with rows and columns indexed by $(\bgamma,\eeta)\preceq (\balpha,\bbeta)$.
\item Delete the last row and replace it with the monomials $(\x^{\bgamma}\,\y^{\eeta})$ with $(\bgamma,\eeta)\preceq (\balpha,\bbeta)$.
\item Define $\tilde{P}_{\balpha,\bbeta}(\x,\y):=\mathrm{det}(S(\balpha,\bbeta))$ and then normalize 
$P_{\balpha,\bbeta}=\tau\,\tilde{P}_{\balpha,\bbeta}$ with $\tau>0$ such that
$\tau^2\int (\tilde{P}_{\balpha,\bbeta})^2\,d\mu=1$.
\end{itemize}
\begin{lemma}
\label{lem-2}
With the ordering ``$\preceq$" and the above construction, the orthonormal polynomials $(P_{\balpha,0})_{\balpha\in\N^n_t}$ 
depend only on $\x$, and are orthonormal w.r.t. the marginal $\phi$ of $\mu$.%, up to degree $t$.
\end{lemma}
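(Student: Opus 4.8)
The plan is to exploit the special ordering ``$\preceq$''. Since it lists every pure-$\x$ monomial $\x^{\bgamma}$ (i.e. those with $\eeta=0$) strictly before any monomial involving $\y$, the initial segment $\{(\bgamma,\eeta):(\bgamma,\eeta)\preceq(\balpha,0)\}$ attached to an index of the form $(\balpha,0)$ consists \emph{only} of pure-$\x$ monomials $\x^{\bgamma}$ with $\bgamma$ preceding $\balpha$ in the lexicographic order. First I would record this combinatorial observation, which is the one place where the choice of ordering genuinely matters.

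Next I would unwind the three-step recipe for an index $(\balpha,0)$. By the observation above, the submatrix $S(\balpha,0)$ is extracted from the leading ``pure-$\x$'' block of $\M_t(\mu)$, whose generic entry is $\mu_{\bgamma+\bgamma',0}=L_\mu(\x^{\bgamma+\bgamma'})=\int \x^{\bgamma+\bgamma'}\,d\mu=\int \x^{\bgamma+\bgamma'}\,d\phi=\phi_{\bgamma+\bgamma'}$, because integrating a function of $\x$ alone against $\mu$ is the same as integrating it against the marginal $\phi$. Hence that block is exactly the moment matrix of $\phi$ (compare the top-left $3\times3$ block of \eqref{ex1}). When the last row of $S(\balpha,0)$ is replaced by the monomials $\x^{\bgamma}\y^{\eeta}$ with $(\bgamma,\eeta)\preceq(\balpha,0)$, these are again pure-$\x$ monomials; expanding $\mathrm{det}(S(\balpha,0))$ along that last row then shows that $\tilde P_{\balpha,0}$ is a polynomial in $\x$ only. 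This already proves the first assertion, and it exhibits $\tilde P_{\balpha,0}$ as the output of the very same determinantal recipe of \cite{annals-prob} applied to the moment matrix of $\phi$.

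For orthonormality I would argue directly. The construction guarantees that the full family $(P_{\balpha,\bbeta})$ is orthonormal w.r.t. $\mu$. Since each $P_{\balpha,0}$ depends only on $\x$, the product $P_{\balpha,0}P_{\balpha',0}$ is a function of $\x$ alone, so $L_\phi(P_{\balpha,0}P_{\balpha',0})=\int P_{\balpha,0}P_{\balpha',0}\,d\phi=\int P_{\balpha,0}P_{\balpha',0}\,d\mu=L_\mu(P_{\balpha,0}P_{\balpha',0})=\delta_{\balpha=\balpha'}$. The same identity applied to $\tilde P_{\balpha,0}$ shows that the normalizing constant $\tau$ computed against $\mu$ equals the one computed against $\phi$, confirming that $(P_{\balpha,0})_{\balpha\in\N^n_t}$ coincides with the orthonormal family produced by the recipe for $\phi$.

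I do not expect a serious obstacle here: the statement is essentially an observation made possible by the ordering, and the only point requiring care is the bookkeeping that the initial segment below $(\balpha,0)$ never meets a $\y$-monomial and that the leading block of $\M_t(\mu)$ literally equals $\M_t(\phi)$. The mild subtlety worth double-checking is the degree count, i.e. that the polynomials $(P_{\balpha,0})_{\balpha\in\N^n_t}$ so obtained indeed range over a basis of $\R[\x]_t$ and match the degrees expected of the marginal's orthonormal polynomials.
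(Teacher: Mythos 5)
Your proposal is correct and follows essentially the same route as the paper: both rest on the observation that the initial segment of ``$\preceq$'' below an index $(\balpha,0)$ contains only pure-$\x$ monomials, so the submatrices $S(\balpha,0)$ of $\M_t(\mu)$ are exactly submatrices of $\M_t(\phi)$, whence the recipe of \cite{annals-prob} applied at these indices reproduces the orthonormal polynomials of the marginal. Your write-up merely makes explicit two points the paper leaves implicit (that pure-$\x$ moments of $\mu$ equal those of $\phi$, and the direct check that orthonormality w.r.t. $\mu$ passes to $\phi$ for functions of $\x$ alone), which is a faithful elaboration rather than a different argument.
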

\begin{proof}
 In the above construction the orthonormal polynomials $(P_{\balpha,0})_{\balpha\in\N^n_t}$ are obtained from the submatrices $S(\balpha,0)$ of $\M_t(\mu)$, $\balpha\in\N^n_t$,
 which are exactly the submatrices of $\M_t(\phi)$ since they are formed with only monomials $\x^{\balpha}$ (as $\vert\bbeta\vert=0$).
  Hence the conclusion follows.
\end{proof}
For illustration purpose, with $\M_2(\mu)$ as in \eqref{ex1},
$\tilde{P}_{0,0}(x,y)=\mu_{0,0}$, and
\[\tilde{P}_{1,0}(x,y)=\mathrm{det}\left(\left[\begin{array}{cc}
\mu_{0,0}&\mu_{1,0}\\
1& x\end{array}\right]\right)\,;\quad
\tilde{P}_{2,0}(x,y)\,=\,\mathrm{det}\left[\begin{array}{ccc}
\mu_{0,0}&\mu_{1,0}&\mu_{2,0}\\
\mu_{1,0}&\mu_{2,0}&\mu_{3,0}\\
1& x&x^2\end{array}\right]\,,\]
so that $\tilde{P}_{1, 0}(x,y)=\mu_{0,0}\,x-\mu_{1,0}$, and
\[\Tilde{P}_{2,0}(x,y)=(\mu_{0,0}\mu_{2,0}-\mu_{1,0}^2)x^2-(\mu_{0,0}\mu_{3,0}-\mu_{1,0}\mu_{2,0})\,x
%&&
+(\mu_{1,0}\mu_{3,0}-\mu_{2,0}^2)\,.\]
\begin{corollary}
\label{coro-1}
Let $\mu$ be a Borel measure on $\X\times \Y$ with marginal $\phi$ on $\X$, and assume that $\M_t(\mu)\succ0$ for all $t$.
Let $(P_{\balpha,\bbeta})$, $(\balpha,\bbeta)\in\N^{n+p}_t$, be the family or orthonormal polynomials defined in Section \ref{variant}, and let
$\Lambda^\mu_t$ be as in \eqref{eq-4}. Then:
\begin{equation}
\label{coro-1-eq-1}
 \Lambda^{\mu}_t(\x,\y)^{-1}\,=\,\Lambda^{\phi}_t(\x)^{-1}+
 \sum_{(\balpha,\bbeta)\in\N^{n+p}_t,\vert\bbeta\vert\geq 1}P_{\balpha,\bbeta}(\x,\y)^2\,.
\end{equation}
\end{corollary}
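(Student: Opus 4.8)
The plan is to read off \eqref{coro-1-eq-1} directly from the definition of the Christoffel function, exploiting the block structure that the ordering ``$\preceq$'' imposes on the family $(P_{\balpha,\bbeta})$. By the definition \eqref{def-christo-1} of the CF, applied to $\mu$ with the orthonormal family constructed in Section \ref{variant}, the reciprocal of $\Lambda^\mu_t$ is exactly the full diagonal kernel
$$\Lambda^\mu_t(\x,\y)^{-1}\,=\,\sum_{(\balpha,\bbeta)\in\N^{n+p}_t}P_{\balpha,\bbeta}(\x,\y)^2\,.$$
The first step is merely to split this sum according to whether $\vert\bbeta\vert=0$ or $\vert\bbeta\vert\geq1$. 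The $\vert\bbeta\vert\geq1$ part is already the second term on the right-hand side of \eqref{coro-1-eq-1}, so the whole statement reduces to the single identity
$$\sum_{\balpha\in\N^n_t}P_{\balpha,0}(\x,\y)^2\,=\,\Lambda^\phi_t(\x)^{-1}\,.$$

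For the second step I would invoke Lemma \ref{lem-2}: each $P_{\balpha,0}$ is a function of $\x$ only (so the $\y$-dependence of the $\vert\bbeta\vert=0$ block disappears, exactly as \eqref{coro-1-eq-1} demands), and the family $(P_{\balpha,0})_{\balpha\in\N^n_t}$ is orthonormal with respect to the marginal $\phi$. Since there are $s_n(t)=\dim\R[\x]_t$ such polynomials and orthonormality forces linear independence, they constitute a genuine orthonormal \emph{basis} of $\R[\x]_t$ for the inner product $(p,q)\mapsto\int p\,q\,d\phi$.

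The only substantive point is then to recognize $\sum_{\balpha}P_{\balpha,0}(\x)^2$ as $\Lambda^\phi_t(\x)^{-1}$, which is precisely the basis-independence of the diagonal reproducing (Christoffel--Darboux) kernel. I would argue this in either of two equivalent ways. Analytically, if $(Q_\balpha)_{\balpha\in\N^n_t}$ is the orthonormal family defining $\Lambda^\phi_t$ through \eqref{def-christo-1}, then $(P_{\balpha,0})$ and $(Q_\balpha)$ differ by an orthogonal change of basis $P=U\,Q$ with $U^TU=\I$, whence $\sum_\balpha P_{\balpha,0}(\x)^2=\Vert U\,\q(\x)\Vert^2=\Vert\q(\x)\Vert^2=\sum_\balpha Q_\balpha(\x)^2=\Lambda^\phi_t(\x)^{-1}$, where $\q(\x):=(Q_\balpha(\x))_{\balpha}$. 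Matrix-theoretically, one notes by \eqref{def-christo-11} that $\Lambda^\phi_t(\x)^{-1}=\v_t(\x)^T\M_t(\phi)^{-1}\v_t(\x)$ and that the submatrices of $\M_t(\mu)$ used to build the $P_{\balpha,0}$ coincide with those of $\M_t(\phi)$ (exactly as in the proof of Lemma \ref{lem-2}), so both sides equal the same quadratic form in $\v_t(\x)$. Substituting this identity into the split sum yields \eqref{coro-1-eq-1}. I do not expect a genuine obstacle here: all the structural work has already been carried out by the ordering ``$\preceq$'' and Lemma \ref{lem-2}, and what remains is only the standard fact that a sum of squares of an orthonormal basis is independent of the basis chosen.
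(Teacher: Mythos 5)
Your proof is correct and takes essentially the same route as the paper's: split the sum defining $\Lambda^\mu_t(\x,\y)^{-1}$ into the $\vert\bbeta\vert=0$ and $\vert\bbeta\vert\geq 1$ blocks, then identify the first block with $\Lambda^\phi_t(\x)^{-1}$ via Lemma \ref{lem-2}. The only difference is that you explicitly justify the basis-independence of the diagonal kernel (by an orthogonal change of basis, or via the moment-matrix identity \eqref{def-christo-11}), a step the paper treats as implicit in the definition \eqref{def-christo-1}.
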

\begin{proof}
 By Lemma \ref{lem-2}, the polynomials $(P_{\balpha,0})_{\balpha\in\N^n_t}$ depend on $\x$ only and are orthonormal w.r.t. $\phi$. Therefore
 by \eqref{def-christo-1}:
  \[\sum_{\balpha\in\N^n_t}P_{\balpha,0}(\x)^2\,=\,\Lambda^\phi_t(\x)^{-1}\,,\quad\forall \x\in\R^n\,,\quad t\in\N\,.\]
 Then the result follows from 
 \begin{eqnarray*}
 \Lambda^\mu_t(\x,y)^{-1}&=&\sum_{(\balpha,\bbeta)\in\N^{n+p}_t}P_{\balpha,\bbeta}(\x,\y)^2\\
 &=&\sum_{\balpha\in\N^n_t,\bbeta=0}P_{\balpha,0}(\x,\y)^2
 +\sum_{(\balpha,\bbeta)\in\N^{n+p}_t:\vert\bbeta\vert\geq 1}P_{\balpha,\bbeta}(\x,\y)^2\\
 &=&\Lambda^{\phi}_t(\x)^{-1}+
 \sum_{(\balpha,\bbeta)\in\N^{n+p}_t:\vert\bbeta\vert\geq 1}P_{\balpha,\bbeta}(\x,\y)^2\,.
  \end{eqnarray*}
\end{proof}

\subsection{Positive polynomials and Christoffel functions}

Recall that $p\in\Sigma[\x]_{t}$ (i.e., $p$ is an SOS of degree at most $2t$)
  if and only if
there exists a real symmetric matrix $\Q\succeq0$ such that $p(\x)\,=\,\v_t(\x)^T\,\Q\,\v_t(\x)$ for all $\x\in\R^n$. Notice that except when $t=1$, there are several possible choices for $\Q$ which is called a Gram matrix of $p$.  As we next see, one choice is particularly interesting. 
The dual cone $\Sigma^*_t$ is the convex cone characterized by:
\[\Sigma^*_t\,=\,\{\,\bphi\in\N^n_{2t}: \M_t(\bphi)\,\succeq\,0\,\}\,.\]
\begin{lemma}
\label{nesterov}
 (i)  Every  SOS polynomial in the interior of $\Sigma_t$ is the reciprocal of the Christoffel function 
 $\Lambda^{\bphi}_t$ of some linear functional  $L_{\bphi}$, with $\bphi\in\mathrm{int}(\Sigma^*_t)$.
 That is, $p\in \mathrm{int}(\Sigma_t)$ if and only if 
 \[p(\x)=\v_t(\x)^T\,\M_t(\bphi)^{-1}\v_t(\x)\,,\quad\forall \x\in\R^n\,,\]
  for some moment sequence $\bphi\in\mathrm{int}(\Sigma^*_t)$.  Therefore $p^{-1}$ is the Christoffel function associated with some linear functional $\bphi$ (with not necessarily a representing measure $\phi$).
   
 (ii) In addition, if $p\in \mathrm{int}(\Sigma_t)$  is univariate then $\bphi$ has a representing 
 measure $\phi$, and so $p^{-1}$ is the Christoffel function $\Lambda^\phi_t$ of some
 measure $\phi$ on the real line.
\end{lemma}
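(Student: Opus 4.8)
The plan is to prove (i) by exhibiting an explicit bijection between $\mathrm{int}(\Sigma_t)$ and $\mathrm{int}(\Sigma^*_t)=\{\bphi:\M_t(\bphi)\succ0\}$, realized by the map $\bphi\mapsto(\Lambda^\bphi_t)^{-1}=\v_t(\cdot)^T\M_t(\bphi)^{-1}\v_t(\cdot)$. The easy inclusion is the ``if'' part: if $p=\v_t(\cdot)^T\M_t(\bphi)^{-1}\v_t(\cdot)$ with $\M_t(\bphi)\succ0$, then $\M_t(\bphi)^{-1}\succ0$ is a positive definite Gram matrix of $p$, so $p\in\Sigma_t$; moreover, using $L_\bpsi(\v_t^T Q\,\v_t)=\langle Q,\M_t(\bpsi)\rangle$, one gets for every nonzero $\bpsi\in\Sigma^*_t$ that $L_\bpsi(p)=\langle\M_t(\bphi)^{-1},\M_t(\bpsi)\rangle>0$ (the trace of the product of a positive definite and a nonzero positive semidefinite matrix), which is exactly the dual-cone characterization of $p\in\mathrm{int}(\Sigma_t)$. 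The whole content of (i) is therefore the converse: given $p\in\mathrm{int}(\Sigma_t)$, produce $\bphi$ with $\M_t(\bphi)\succ0$ and $(\Lambda^\bphi_t)^{-1}=p$.

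For the converse I would use a variational argument built on one gradient identity. Writing $\mathbf{c}(q)=(c_\bgamma)$ for the coefficient sequence of $q\in\R[\x]_{2t}$, differentiation of $\bphi\mapsto\log\mathrm{det}\,\M_t(\bphi)$ gives $[\nabla_\bphi\log\mathrm{det}\,\M_t(\bphi)]_\bgamma=\sum_{\balpha+\bbeta=\bgamma}[\M_t(\bphi)^{-1}]_{\balpha\bbeta}$, which is precisely the coefficient of $\x^\bgamma$ in $\v_t(\x)^T\M_t(\bphi)^{-1}\v_t(\x)$; that is, $\nabla_\bphi\log\mathrm{det}\,\M_t(\bphi)=\mathbf{c}((\Lambda^\bphi_t)^{-1})$. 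Hence, for the fixed $p\in\mathrm{int}(\Sigma_t)$, the strictly convex function $G(\bphi):=L_\bphi(p)-\log\mathrm{det}\,\M_t(\bphi)$ on the open set $\{\bphi:\M_t(\bphi)\succ0\}$ has gradient $\mathbf{c}(p)-\mathbf{c}((\Lambda^\bphi_t)^{-1})$, so any interior stationary point $\bphi$ satisfies $p=(\Lambda^\bphi_t)^{-1}$, as desired.

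The main obstacle is to guarantee that $G$ attains its infimum at an interior point rather than escaping to the boundary or to infinity, and this is exactly where $p\in\mathrm{int}(\Sigma_t)$ (not merely $p\in\Sigma_t$) is needed. Near the boundary, where $\M_t(\bphi)$ degenerates to a singular positive semidefinite matrix, the barrier term $-\log\mathrm{det}\,\M_t(\bphi)\to+\infty$ while $L_\bphi(p)$ stays finite, so $G\to+\infty$. For coercivity at infinity I would use that $p\in\mathrm{int}(\Sigma_t)$ forces $L_\bphi(p)=\langle\mathbf{c}(p),\bphi\rangle$ to grow at least linearly in $\|\bphi\|$ on $\Sigma^*_t$ (by the dual-cone characterization together with a compactness argument on $\Sigma^*_t\cap\{\|\bphi\|=1\}$), whereas $\log\mathrm{det}\,\M_t(\bphi)$ grows at most like $s_n(t)\log\|\bphi\|$; thus the linear term dominates and $G\to+\infty$. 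Combined with strict convexity (the map $\bphi\mapsto\M_t(\bphi)$ is injective and $-\log\mathrm{det}$ is strictly convex on positive definite matrices) this yields a unique interior minimizer, completing (i). Equivalently, one may phrase this by noting that $-\log\mathrm{det}\,\M_t(\cdot)$ is a logarithmically homogeneous self-concordant barrier for $\Sigma^*_t$ and invoking Nesterov's theorem that its gradient map is a bijection $\mathrm{int}(\Sigma^*_t)\to\mathrm{int}(\Sigma_t)$.

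For (ii), when $p$ is univariate the minimizer $\bphi$ from (i) yields a genuine univariate Hankel matrix $\M_t(\bphi)\succ0$, and I would then invoke the classical truncated Hamburger moment problem: a positive definite univariate Hankel matrix always admits a representing measure $\phi$ on $\R$ (for instance the $(t+1)$-atomic Gaussian-quadrature measure supported on the roots of the degree-$(t+1)$ orthonormal polynomial associated with $\bphi$). Then $\bphi$ is the moment sequence of $\phi$ and $p^{-1}=\Lambda^\phi_t$ is the Christoffel function of the bona fide measure $\phi$. This last step is precisely what fails for $n\ge2$, since there $\M_t(\bphi)\succ0$ no longer guarantees a representing measure, which is why in the multivariate case $\bphi$ remains only a linear functional.
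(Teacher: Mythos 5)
Your proposal is correct, but it takes a more self-contained route than the paper. The paper disposes of part (i) in one line by citing Nesterov's Theorem 17.3 --- the interiors of $\Sigma_t$ and $\Sigma^*_t$ are in one-to-one correspondence via the $-\log\mathrm{det}$ barrier --- and then reads off the Christoffel-function formulation from \eqref{def-christo-11}; the representing-measure claim in (ii) is left implicit (it is the classical solvability of the positive definite truncated Hamburger moment problem). What you do instead is open up that black box: your ``if'' direction via the pairing $L_{\bpsi}(p)=\langle \M_t(\bphi)^{-1},\M_t(\bpsi)\rangle>0$ is the dual-cone characterization of $\mathrm{int}(\Sigma_t)$, and your converse --- strict convexity and coercivity of $G(\bphi)=L_{\bphi}(p)-\log\mathrm{det}\,\M_t(\bphi)$ together with the gradient identity $\nabla_{\bphi}\log\mathrm{det}\,\M_t(\bphi)=\mathbf{c}\bigl((\Lambda^{\bphi}_t)^{-1}\bigr)$ --- is essentially a proof of Nesterov's theorem in this special case, as you acknowledge. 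What your version buys is (a) a complete argument requiring nothing beyond finite-dimensional convex analysis, and (b) the fact that your stationarity condition is precisely the optimality condition underlying the paper's computational recipe \eqref{pb-optim} for obtaining the moment matrix of $\nu_{\x,t}$, so the existence proof and the algorithm become the same object; what the paper's citation buys is brevity and the central-path interpretation that it exploits in the discussion following the lemma. Your treatment of (ii) is likewise correct and more explicit than the paper's; the only gloss is in your parenthetical construction: since $\bphi$ only provides moments up to degree $2t$, one must first fix an arbitrary value for $\phi_{2t+1}$ before the degree-$(t+1)$ orthogonal polynomial (and hence the $(t+1)$-point quadrature reproducing all moments through degree $2t$) is defined. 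That is the standard argument for the positive definite truncated Hamburger problem, so this is a presentational gloss, not a gap.
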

\begin{proof}
 The first part of the statement is a direct consequence from
 Nesterov \cite[Theorem 17.3, p. 412]{nesterov} which states that the respective interiors 
 of $\Sigma_t$ and its dual$\Sigma^*_t$ are in one-to-one correspondence, and $-\log\mathrm{det}(\A)$ 
 is a $\tau$-self-concordant barrier function associated with the convex cone $\Sigma_t$, with $\tau={n+t\choose t}$. The second statement
 follows from the characterization \eqref{def-christo-11} of $\Lambda^{\bphi}_t$.
\end{proof}
Surprisingly, the fact that every (strictly positive) SOS polynomial of degree at most $2t$, is the Christoffel function $\Lambda^{\bphi}_t$ of some linear 
functional $L_{\bphi}$ on $\R[\x]_{2t}$ with $\M_t(\bphi)\succ0$, does not seem 
to have been noticed before, even though Nesterov's result \cite[Theorem 17.3]{nesterov} is quite classical in convex conic optimization.
In addition, observe that Lemma \ref{nesterov} is the degree-$t$ analogue of the well-known fact that 
the Gram matrix of every positive quadratic form is the covariance of a Gaussian measure (possibly after scaling). 
Finally, and said differently, the Christoffel functions $\Lambda^{\bphi}_t$
associated with moment matrices $\M_t(\bphi)$ of size $\tau={n+t\choose t}$, 
encode the \emph{central path}\footnote{In convex optimization, the central path 
associated with a convex cone $\K$, plays a central role in the analysis of the computational complexity of interior points methods for optimizing over such a cone.} of the convex cone $\Sigma_t$ of $n$-variate SOS polynomials of degree $2t$.
 
 We even have a similar result in a more general context. With $g_j\in\R[\x]_{d_j}$,  let $\g_j=(g_{j\balpha})_{\balpha\in\N^n_{d_j}}$ denote its vector of coefficients,  $j=0,\ldots,m$.
 Given  a real sequence $\bphi=(\phi_{\balpha})_{\balpha\in\N^n}$, define
 the new sequences $g_j\cdot\bphi\,:=\,(\phi_{j,\balpha})_{\balpha\in\N^n}$, where for each $j=0,\ldots,m$, 
 \[\phi_{j,\balpha}\,:=\,\sum_{\bbeta\in\N^n_{d_j}} g_{j\bbeta}\,\phi_{\balpha+\bbeta}\,,\quad \forall\balpha\in\N^n\,.\]
 
 \begin{lemma}
 \label{lemma-multi}
 With $g_j\in \R[\x]_{d_j}$, let $s_j:=\lceil \mathrm{deg}(g_j)/2\rceil$, $j=0,\ldots,m$, 
 and for every $t\geq\max_j s_j$, let $\K_t\subset\R[\x]$ be the convex cone defined by:
\begin{equation}
\label{module}
\K_t\,:=\,\{\sum_{j=0}^m \sigma_j(\x)\,g_j(\x)\::\quad \sigma_j\in\Sigma_{t-s_j}\,\}\,\subset\R[\x]_{2t}\,.\end{equation}
If $p\in\mathrm{int}(\K_t)$ then $p\geq0$ on $S:=\{\,\x: g_j(\x)\geq0\,,\:j=0,\ldots,m\,\}\subset\R^n$ and
\begin{equation}
 p(\x)\,=\,\sum_{j=0}^m\Lambda^{g_j\cdot\bphi}_{t-s_j}(\x)^{ -1}\,g_j(\x),\quad\forall \x\in\R^n\,
 \end{equation}
 for some linear functional ${\bphi}=(\phi_{\balpha})_{\balpha\in\N^n_{2t}}$ that satisfies
 $\M_{t-s_j}(g_j\cdot\bphi)\succ0$ for all $j=0,\ldots,m$.
 \end{lemma}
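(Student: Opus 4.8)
The nonnegativity of $p$ on $S$ is immediate: any $p\in\K_t$ is of the form $\sum_{j=0}^m \sigma_j\,g_j$ with $\sigma_j\in\Sigma_{t-s_j}$, and on $S$ every $g_j\geq0$ while every SOS $\sigma_j\geq0$, so $p\geq0$ on $S$. The substance of the lemma is the representation, and the plan is to mimic the proof of Lemma \ref{nesterov}, replacing the single SOS cone $\Sigma_t$ by the cone $\K_t$. The starting observation is that $\K_t$ is the image of a product of positive semidefinite cones under a linear map: writing each $\sigma_j=\v_{t-s_j}(\x)^T\Q_j\,\v_{t-s_j}(\x)$ with $\Q_j\succeq0$, the cone $\K_t$ is exactly the image of $\prod_{j=0}^m\{\Q_j\succeq0\}$ under
\[(\Q_0,\dots,\Q_m)\;\longmapsto\;\sum_{j=0}^m g_j(\x)\,\v_{t-s_j}(\x)^T\Q_j\,\v_{t-s_j}(\x)\,.\]
In particular a point $p\in\mathrm{int}(\K_t)$ admits at least one such representation with all $\Q_j\succ0$.

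The key algebraic identity is the localization rule $L_{\bphi}(g_j\,q)=L_{g_j\cdot\bphi}(q)$, valid for all $q$, which follows at once from the definition of $g_j\cdot\bphi$. Using it together with the image-cone duality $(A\mathcal{C})^*=(A^*)^{-1}(\mathcal{C}^*)$ for the linear map $A$ above, the dual cone is
\[\K_t^*=\{\,\bphi:\ \M_{t-s_j}(g_j\cdot\bphi)\succeq0,\ j=0,\dots,m\,\}\,,\]
so that $\mathrm{int}(\K_t^*)=\{\,\bphi:\ \M_{t-s_j}(g_j\cdot\bphi)\succ0\ \forall j\,\}$. Following the barrier calculus for images of cones, I would equip $\K_t$ with the self-concordant barrier obtained by minimizing $-\sum_j\log\det\Q_j$ over all Gram representations of a fixed $p$, i.e. solve the convex program
\[\max_{\Q_0,\dots,\Q_m\succ0}\ \sum_{j=0}^m\log\det\Q_j\quad\text{s.t.}\quad \sum_{j=0}^m g_j(\x)\,\v_{t-s_j}(\x)^T\Q_j\,\v_{t-s_j}(\x)=p(\x)\,.\]
This is precisely the single $\log\det$ optimization over positive definite matrices announced in the introduction, now carried out over a tuple $(\Q_j)_j$.

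The representation then drops out of the first-order optimality conditions. Attaching a multiplier $\bphi\in\R^{s_n(2t)}$ (a linear form on $\R[\x]_{2t}$) to the equality constraint and invoking the localization identity, the term $L_{\bphi}(g_j\,\v_{t-s_j}^T\Q_j\v_{t-s_j})$ equals $\langle\Q_j,\M_{t-s_j}(g_j\cdot\bphi)\rangle$ in the trace inner product; stationarity in $\Q_j$ gives $\Q_j^{-1}=\M_{t-s_j}(g_j\cdot\bphi)$, whence $\M_{t-s_j}(g_j\cdot\bphi)\succ0$ and $\Q_j=\M_{t-s_j}(g_j\cdot\bphi)^{-1}$ at the optimum. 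Substituting back into the Gram representation and using the determinantal form \eqref{def-christo-11} of the Christoffel function yields
\[p(\x)=\sum_{j=0}^m g_j(\x)\,\v_{t-s_j}(\x)^T\M_{t-s_j}(g_j\cdot\bphi)^{-1}\,\v_{t-s_j}(\x)=\sum_{j=0}^m \Lambda^{g_j\cdot\bphi}_{t-s_j}(\x)^{-1}\,g_j(\x)\,,\]
which is the claim.

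The step I expect to be the main obstacle is showing that the maximizer exists and is \emph{interior}, i.e. that all optimal $\Q_j$ are strictly positive definite (equivalently that $\bphi$ lands in $\mathrm{int}(\K_t^*)$ with every $\M_{t-s_j}(g_j\cdot\bphi)\succ0$, so that each $\Lambda^{g_j\cdot\bphi}_{t-s_j}$ is even well defined). This is exactly where I would invoke Nesterov \cite[Theorem 17.3]{nesterov}, as in Lemma \ref{nesterov}: the barrier $F$ above is $\tau$-self-concordant and logarithmically homogeneous on $\K_t$, its Legendre conjugate is the corresponding barrier of $\K_t^*$, and the gradient map $-\nabla F$ is a bijection from $\mathrm{int}(\K_t)$ onto $\mathrm{int}(\K_t^*)$; this simultaneously guarantees attainment of the $\log\det$ maximum at an interior tuple and that the associated $\bphi$ has all localizing moment matrices positive definite. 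The remaining verifications are routine: the image-barrier calculus (that the minimized $-\sum_j\log\det$ is indeed the barrier of $\K_t$) and the adjoint computation turning $A^*\bphi$ into the tuple $\bigl(\M_{t-s_j}(g_j\cdot\bphi)\bigr)_j$.
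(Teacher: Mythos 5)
Your proposal is correct and follows essentially the same route as the paper: both rest on Nesterov's interior-to-interior correspondence between $\K_t$ and its dual $\K_t^*$, and your $\log\det$/KKT derivation (stationarity giving $\Q_j^{-1}=\M_{t-s_j}(g_j\cdot\bphi)$, hence the representation via \eqref{def-christo-11}) is precisely the content of identity (17.9) in Nesterov's Theorem 17.7, which the paper quotes directly rather than re-deriving. The one imprecision is the citation: for the weighted cone $\K_t$ the relevant results are Nesterov's Theorem 17.6(2) and Theorem 17.7 (as cited in the paper), not Theorem 17.3, which covers only the unweighted SOS cone $\Sigma_t$.
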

\begin{proof}
 The proof is in the same spirit and again relies on the one-to-one mapping between the interior of
 the  convex cone $\K_t$ and that of its dual 
\[\K^*_t\,=\,\,\{\,\bphi\in\N^n_{2t}:\: \M_{t-s_j}(g_j\cdot\bphi)\,\succeq0\,,\quad j=0,\ldots,m\,\}\,,\]
described in \cite[Theorem 17.6(2), p. 416]{nesterov}. 
Translated in our notation, (17.9) in Nesterov \cite[Theorem 17.7, p. 417]{nesterov} reads,
\[p(\x)\,=\,\sum_{j=0}^m \v_{t-s_j}(\x)^T\M_{t-s_j}(g_j\cdot\bphi)^{-1}\,\v_{t-s_j}(\x)\,,\quad\forall \x\in\R^n\,.\]
for some $\bphi\in\mathrm{int}(\K^*_t)$.
\end{proof}
Again the Christoffel functions $\Lambda^{g_j\cdot\bphi}_{t-s_j}$ associated with
the moment matrices $\M_{t-s_j}(g_j\cdot\bphi)$ encode the central path of the convex cone
$\K_t$ in \eqref{module}.
For compact set $S$ (with an additional Archimedean assumption), 
the cone $\K_t$ is very important in the Moment-SOS hierarchy for polynomial optimization \cite{CUP}. 
It is used to replace the intractable positivity constraint ``$p\geq0$" on $S$,
with the more restrictive constraint ``$p\in\K_t$" (and let $t$ increase) because the latter being semidefinite representable,
is tractable. 
%Hence Lemma \ref{lemma-multi} provides the additional and potentially useful information that for every
%$p\in \mathrm{int}(\K_t)$,there is a particular choice for SOS weights $\sigma_j$ in \eqref{module}, 
%namely $\sigma_j^{-1}=\Lambda^{g_j\cdot\phi}_{t-s_j}$, $j=0,\ldots,m$. That is, 
%the $\sigma_j$'s with $j>0$, can all be deduced from $\sigma_0$.

\section{Main result}

Let $\mu$ be a Borel measure on a compact set $\bom\subset\X\times\Y\subset\R^n\times\R$ which disintegrates into 
its marginal $\phi$ on $\X\subset\R^n$ and its conditional probability $\hat{\mu}(dy\,\vert\,\x)$ on $\Y_\x\subset\Y$ for every $\x\in\X$.
Throughout the rest of the paper we assume that $\bom$ has nonempty interior so that $\M_t(\mu)\succ0$ for all $t\in\N$, where $\hat{\mu}_t(\mu)$ is constructed as in Section \ref{variant}.
\begin{theorem}
\label{th-main}
 %Assume that $\M_t(\phi)$ and $\M_t(\hat{\mu})$ are non-singular for every $t$, so that $\M_t(\phi)\,,\:\M_t(\hat{\mu})\succ0$ for all $t$. 
 Let $\Lambda^\mu_t$ be as in \eqref{eq-4} with $\M_t(\mu)$ constructed as indicated just above \eqref{eq-4}. Then for every $\x\in\X$ and $t\in\N$, there exists a probability measure $\nu_{\x,t}$ on $\R$ such that
  \begin{equation}
  \label{th-main-1}
 \Lambda^\mu_t(\x,y)\,=\,\Lambda^\phi_t(\x)\cdot\Lambda^{\nu_{\x,t}}_t(y)\,,\quad\forall \x\in\X\,,\:y\in\R\,.
\end{equation}
 \end{theorem}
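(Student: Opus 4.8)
The plan is to derive the multiplicative identity \eqref{th-main-1} from the additive splitting already recorded in Corollary \ref{coro-1}, combined with the ``SOS $=$ reciprocal Christoffel function'' correspondence of Lemma \ref{nesterov}(ii). Fix $\x\in\X$ and $t\in\N$. Since the $j=0$ block of $\M_t(\mu)$ is exactly $\M_t(\phi)$ (Lemma \ref{lem-2}), it is positive definite, so $\v_t(\x)^T\M_t(\phi)^{-1}\v_t(\x)>0$ and $\Lambda^\phi_t(\x)>0$. Specializing Corollary \ref{coro-1} to $p=1$ (so $\bbeta=j\in\N$),
\[\Lambda^\mu_t(\x,y)^{-1}=\Lambda^\phi_t(\x)^{-1}+\sum_{(\balpha,j)\in\N^{n+1}_t,\,j\geq1}P_{\balpha,j}(\x,y)^2,\]
and factoring out the positive scalar $\Lambda^\phi_t(\x)^{-1}$, I would introduce, for this fixed $\x$, the univariate polynomial
\[Q_{\x,t}(y):=1+\Lambda^\phi_t(\x)\sum_{(\balpha,j)\in\N^{n+1}_t,\,j\geq1}P_{\balpha,j}(\x,y)^2\in\R[y],\]
so that $\Lambda^\mu_t(\x,y)^{-1}=\Lambda^\phi_t(\x)^{-1}\,Q_{\x,t}(y)$. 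The whole statement then reduces to realizing $Q_{\x,t}$ as the reciprocal of a univariate Christoffel function.

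The second step checks the hypotheses of Lemma \ref{nesterov}(ii) for $Q_{\x,t}$ viewed as a polynomial in $y$. Each $P_{\balpha,j}(\x,\cdot)$ has $y$-degree $j\le t$, so $\deg_y Q_{\x,t}\le 2t$; the only term contributing to $y^{2t}$ is $P_{0,t}(\x,\cdot)^2$, whose coefficient is $\Lambda^\phi_t(\x)$ times the square of the (positive, constant) leading coefficient of $P_{0,t}$, hence $\deg Q_{\x,t}=2t$ exactly. Moreover $Q_{\x,t}(y)\geq1>0$ on all of $\R$, being $1$ plus a sum of squares, so $Q_{\x,t}$ is a strictly positive univariate polynomial of full even degree, i.e.\ it lies in $\mathrm{int}(\Sigma_t)$. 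Lemma \ref{nesterov}(ii) then produces a measure $\nu_{\x,t}$ on $\R$ with $Q_{\x,t}(y)=\v_t(y)^T\M_t(\nu_{\x,t})^{-1}\v_t(y)=\Lambda^{\nu_{\x,t}}_t(y)^{-1}$, and substituting back gives $\Lambda^\mu_t(\x,y)^{-1}=\Lambda^\phi_t(\x)^{-1}\Lambda^{\nu_{\x,t}}_t(y)^{-1}$, which is \eqref{th-main-1}. The moment matrix $\M_t(\nu_{\x,t})$ announced in the introduction is the output of the underlying $\log\det$ (self-concordant barrier) program, namely the unique positive definite Gram matrix of $Q_{\x,t}$ whose inverse is a Hankel matrix.

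The delicate point, which I expect to be the main obstacle, is the assertion that $\nu_{\x,t}$ is a \emph{probability} measure, i.e.\ that $\M_t(\nu_{\x,t})_{00}=1$. This cannot follow from Lemma \ref{nesterov} alone, since rescaling $\nu\mapsto c\nu$ sends $\Lambda^{\nu}_t{}^{-1}\mapsto c^{-1}\Lambda^{\nu}_t{}^{-1}$ and would change $Q_{\x,t}$; the unit mass must instead be extracted from the specific constant $1$ created by the factorization. Writing $Q_{\x,t}=\sum_{k=0}^t q_k^2$ with $q_k$ the orthonormal polynomials of $\nu_{\x,t}$, one has $q_0^2=1/\M_t(\nu_{\x,t})_{00}$, so the claim is equivalent to $q_0\equiv1$. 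My plan would be to combine the elementary identity $(\M_t(\nu_{\x,t})^{-1})_{00}=Q_{\x,t}(0)$ (the top-left entry of any Gram matrix equals the constant coefficient) with the Hankel constraint characterizing the Nesterov point, in order to pin down the degree-zero orthonormal polynomial. This normalization is transparent at $t=1$, where $Q_{\x,t}-1$ is a single perfect square; for $t\geq2$, where $Q_{\x,t}-1$ is only a sum of several squares, establishing $\M_t(\nu_{\x,t})_{00}=1$ is precisely the step I expect to require the most care (and possibly an additional normalization argument).
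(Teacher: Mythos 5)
Your argument is, step for step, the paper's own proof of Theorem \ref{th-main}: specialize Corollary \ref{coro-1} to $p=1$, divide by $\Lambda^{\phi}_t(\x)^{-1}$ to get the univariate polynomial $Q_{\x,t}$ (the paper calls it $p_t(\cdot\,;\x)$), and invoke Lemma \ref{nesterov}(ii). One place where you are \emph{more} careful than the paper: Lemma \ref{nesterov}(ii) needs $Q_{\x,t}\in\mathrm{int}(\Sigma_t)$, and strict positivity alone does not give this (the constant $1$ is a strictly positive SOS on the boundary of $\Sigma_t$ for $t\geq1$); your check that the coefficient of $y^{2t}$ is a positive constant, coming only from $P_{0,t}$, is exactly the needed degree argument, so keep it. (Minor slip at the end: the object produced by the $\log\det$ program is $\M_t(\nu_{\x,t})^{-1}$, i.e.\ the Gram matrix of $Q_{\x,t}$ with Hankel inverse, not $\M_t(\nu_{\x,t})$ itself; the paper's \eqref{pb-optim} contains the same off-by-an-inverse.)

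The ``delicate point'' you flag is a genuine gap, and you should know two things about it. First, the paper does not close it either: its proof ends by extracting ``a Borel measure $\nu_{\x,t}$'' from Lemma \ref{nesterov}(ii) and never verifies the unit mass claimed in the statement. Second, the additional normalization argument you hope for does not exist, because the claim $\M_t(\nu_{\x,t})_{00}=1$ is false for $t\geq2$. As you yourself note, the moments of $\nu_{\x,t}$ up to degree $2t$ are uniquely determined by $Q_{\x,t}$ (strict convexity of $-\log\det$ on positive definite Hankel matrices makes the correspondence injective), so the mass is pinned down and cannot be rescaled away. Now take $\mu$ uniform on $[0,1]^2$, $t=2$, $x=\tfrac12$; with the orthonormal shifted Legendre polynomials $p_1(y)=\sqrt3(2y-1)$, $p_2(y)=\sqrt5(6y^2-6y+1)$ one gets
\begin{equation*}
Q_{x,2}(y)\,=\,1+\tfrac49\bigl(p_1(y)^2+p_2(y)^2\bigr)\,=\,80y^4-160y^3+112y^2-32y+\tfrac{41}{9}\,.
\end{equation*}
Its Gram matrices in the basis $(1,y,y^2)$ form a one-parameter family with $\Q_{00}=\tfrac{41}{9}$, $\Q_{01}=-16$, $\Q_{02}=s$, $\Q_{11}=112-2s$, $\Q_{12}=-80$, $\Q_{22}=80$; the requirement that $\Q^{-1}$ be Hankel is the quadratic $27s^2-1008s+8240=0$, whose unique root giving $\Q\succ0$ is $s=(168-4\sqrt{219})/9$, and inverting that $\Q$ yields the moment vector $\approx(1.293,\,0.646,\,0.451,\,0.353,\,0.298)$, of total mass $\frac{135(\sqrt{219}-6)}{2(73\sqrt{219}-621)}\approx 1.293\neq1$. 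So no probability measure can satisfy \eqref{th-main-1} in this example. What your proof (and the paper's) actually establishes is Theorem \ref{th-main} with ``probability measure'' weakened to ``finite positive Borel measure''; at $t=1$ the single-square structure does force unit mass, exactly as you observed, but for $t\geq2$ the step you earmarked as needing the most care is not completable, and the honest fix is to weaken the statement rather than to strengthen the proof.
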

\begin{proof}
Let $t\in\N$ and $\x\in\R^n$ be fixed.
 From \eqref{coro-1-eq-1} in Corollary \ref{coro-1} and as $p=1$,
 \begin{eqnarray*}
 \frac{\Lambda^\mu_t(\x,y)^{-1}}{\Lambda^\phi_t(\x)^{-1}}&=&
 1+\Lambda^{\phi}_t(\x)\,\left[
 \sum_{(\balpha,j)\in\N^{n+1}_t,1\leq j\leq t }P_{\balpha,j}(\x,y)^2\,\right]\\
 &=:&p_t(y\,;\,\x)\,\in\,\R[y]_{2t}\,.
 \end{eqnarray*}
 Hence for each fixed $\x\in\R^n$, $1\leq p_t(y\,;\,\x)\in\R[y]$ is a 
 strictly positive univariate SOS. Therefore by Lemma \ref{nesterov}(ii) there exists a Borel measure 
 $\nu_{\x,t}$ on $\R$ such that $p_t(y\,;\,\x)^{-1}\,=\,\Lambda_t^{\nu_{\x,t}}(y)$, which yields 
 \eqref{th-main-1}. 
 %It remains to prove that the measure $\nu_{\x,t}$ is the same as $\nu_{\x,t'}$ whenever $t'>t$.
\end{proof}
When $\x\in\X$, notice how well \eqref{th-main-1} mimics the disintegration \eqref{eq-0} of $\mu$
into its marginal $\phi$ on $\X$ and its conditional $\hat{\mu}(dy\,\vert\,\x)$ on $\Y_\x$, 
given $\x\in\X$. However when $\x\in\X$, it remains to relate the family of measures $(\nu_{\x,t})_{t\in\N}$ on $\Y_\x$ with the conditional probability $\hat{\mu}(dy\,\vert\,\x)$. 

\paragraph{Computing the moment matrix of $\nu_{\x,t}$}
To obtain the moment matrix of $\nu_{\x,t}$, for an arbitrary but fixed $\x\in\R^n$, is relatively easy. 
Let $S_t$ be the space of $(t+1)\times (t+1)$ real symmetric matrices.\\

- Compute the polynomial $y\mapsto p_t(y;\x):=\Lambda^\mu_t(\x,y)^{-1}/\Lambda^{\phi}_t(\x)^{-1}$ which is an SOS in "$y$" of degree $2t$. This is easy once moments of $\mu$ are available. Indeed one computes $\Lambda^\mu_t(\x,y)$ (resp. $\Lambda_t^\phi(\x)$) via \eqref{def-christo-11} with the moment matrix $\M_t(\mu)$ and $\v_t(\x,\y)$
(resp. $\M_t(\phi)$ and $\v_t(\x)$).

- Then following \cite[p. 412]{nesterov}, solve the convex optimization problem 
\begin{equation}
\label{pb-optim}
\M_t(\nu_{\x,t})\,=\,\arg\min_{0\prec\Q\in\,S_t}\,\{\,-\log\,\mathrm{det}(\Q)
\,:\: p_t(y;\x)\,=\,\v_t(y)^T\,\Q\,\v_t(y)\,,\quad\forall y\,\}.\end{equation}
The optimization problem \eqref{pb-optim} is convex and can be solved by 
off-the-shelf solvers like e.g. CVX \cite{cvx}.

\paragraph{Multivariate conditional}

If $p>1$ and $\Y\subset\R^p$, then we still obtain the decomposition \eqref{th-main-1}
\begin{equation}
\label{multi-p}
\Lambda^\mu_t(\x,\y)\,=\,\Lambda^\phi_t(\x)\cdot\Lambda^{\bnu_{\x,t}}_t(\y)\,,\quad\forall \x\in\X\,,\:\y\in\R^p\,,\end{equation}
with exactly the same proof as that of Theorem \ref{th-main}. The difference with \eqref{th-main-1} is that $\bnu_{\x,t}$  in \eqref{multi-p} 
is a linear functional  on $\R[\y]_t$ which is not guaranteed to
have  a representing measure $\nu_{\x,t}$ on $\R^p$.

\subsection{Discussion}
Define the scalar 
$s_n(t):={n+t\choose t}$ for every integer $t,n$.
Under some conditions on the sets $\bom$ and $\X,\Y_\x$ and if $\mu$ has a density w.r.t. Lebesgue measure on $\bom$ that also satisfies some conditions, then one may indeed relate the family
$(\nu_{\x,t})_{t\in\N}$ with the conditional probability $\hat{\mu}(dy\,\vert\,\x)$ on$\Y$, given $\x\in\X$. Under such conditions one may interpret the limit $s_{n+1}(t)\Lambda^\mu_t(\x,y)$ and $s_n(t)\Lambda^{\phi}_t(\x)$,
as $t$ increases, in terms of the density of $\mu$ and an \emph{equilibrium} measure intrinsically related to the respective supports $\bom$ and $\X$. 
For such conditions the interested reader is referred to \cite{CD-2022,Lubinsky} and the many references therein. For instance:
\begin{corollary} \label{outside}

Let $\bom=\X\times\Y\subset\R^{n+1}$ be compact with 
$\bom=\overline{\mathrm{int}(\bom)}$, $\X=\overline{\mathrm{int}(\X)}$, 
and assume that $\mu$ has a density 
$f$ w.r.t. Lebesgue on $\R^{n+1}$, bounded away from $0$ on $\bom$. 

If $\x\in\mathrm{int}(\X)$ but $(\x,y)\not\in\bom$, then as $t$ increases, $\Lambda^{\nu_{\x,t}}_t(y)\downarrow 0$ exponentially fast (as would do
the Christoffel function $\Lambda^{\hat{\mu}}_t(y)$ of the conditional probability $\hat{\mu}(dy\,\vert\,\x)$).
%(ii) If $(\x,y)\in\mathrm{int}(\bom)$ then $\Lambda^{\nu_{\x,t}}_t(y)$ grows as $O(t)$.
\end{corollary}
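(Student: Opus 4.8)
The plan is to read the claim directly off the factorization \eqref{th-main-1} of Theorem \ref{th-main}, which for fixed $\x$ gives
\[
\Lambda^{\nu_{\x,t}}_t(y)\,=\,\frac{\Lambda^\mu_t(\x,y)}{\Lambda^\phi_t(\x)}\,,
\]
and then to bound numerator and denominator separately using the classical interior/exterior asymptotics of the Christoffel function recalled in the introduction and in \cite{CD-2022,Lubinsky}. Since $\bom=\X\times\Y$ is compact with $\bom=\overline{\mathrm{int}(\bom)}$ and $\mu$ has a density $f$ bounded away from $0$ on $\bom$, every neighborhood of a point of $\bom$ meets $\mathrm{int}(\bom)$ on a set of positive Lebesgue measure where $f\geq\inf_\bom f>0$, so $\supp\mu=\bom$ exactly. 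The hypothesis $(\x,y)\notin\bom$ thus places $(\x,y)$ strictly outside $\supp\mu$, and the support-detecting property of the CF yields that $\Lambda^\mu_t(\x,y)\downarrow 0$ exponentially fast in $t$.

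Next I would control the denominator. Because $\bom$ is the product $\X\times\Y$ with $\Y=\overline{\mathrm{int}(\Y)}$ of positive length, the marginal $\phi$ admits the density $\x\mapsto\int_\Y f(\x,y)\,dy\geq(\inf_\bom f)\cdot\mathrm{vol}(\Y)>0$, i.e. a density bounded away from $0$ on $\X$; the same argument as above gives $\supp\phi=\X$, whence $\x\in\mathrm{int}(\X)=\mathrm{int}(\supp\phi)$ by $\X=\overline{\mathrm{int}(\X)}$. Under these regularity conditions the known interior lower bounds on the Christoffel function (again \cite{CD-2022,Lubinsky}) furnish a constant $c=c(\x)>0$ with
\[
\Lambda^\phi_t(\x)\,\geq\,\frac{c}{s_n(t)}\,,\qquad t\in\N\,,
\]
so that the denominator decays at most polynomially in $t$.

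Combining the two estimates, the quotient
\[
\Lambda^{\nu_{\x,t}}_t(y)\,=\,\frac{\Lambda^\mu_t(\x,y)}{\Lambda^\phi_t(\x)}\,\leq\,s_n(t)\,c^{-1}\,\Lambda^\mu_t(\x,y)
\]
is an exponentially small quantity inflated by only a polynomial factor, hence still tends to $0$ exponentially fast. This is exactly the asserted behaviour, and it mirrors that of $\Lambda^{\hat{\mu}}_t(y)$ for the conditional $\hat{\mu}(dy\,\vert\,\x)$, whose support $\Y$ does not contain $y$.

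I expect the main obstacle to be the denominator bound: one must verify that the marginal $\phi$ genuinely inherits the hypotheses required to invoke the interior Christoffel asymptotics, namely a density bounded away from $0$ near $\x$ together with $\x$ lying in the interior of a sufficiently regular support. The product structure $\bom=\X\times\Y$, combined with $f\geq\inf_\bom f>0$ and $\Y=\overline{\mathrm{int}(\Y)}$, is precisely what makes the lower bound $\int_\Y f(\x,\cdot)\geq(\inf_\bom f)\,\mathrm{vol}(\Y)$ transparent; without the product structure one would instead need the fibers over interior points of $\X$ to carry positive length uniformly, which is the delicate point. By contrast, the exponential decay of the numerator is the standard outside-the-support estimate and requires no new argument.
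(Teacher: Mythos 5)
Your proof is correct and follows essentially the same route as the paper's: divide the factorization of Theorem \ref{th-main} into an exponentially decaying numerator $\Lambda^\mu_t(\x,y)$ (the standard outside-the-support estimate, since $(\x,y)\notin\bom=\supp\mu$) and a denominator $\Lambda^\phi_t(\x)$ bounded below by $c/s_n(t)$ at interior points where the marginal density is bounded away from zero. Your added justifications (that $\supp\mu=\bom$, and that the marginal density is bounded below via $\int_\Y f(\x,\cdot)\geq(\inf_\bom f)\,\mathrm{vol}(\Y)$) are details the paper leaves implicit, but the argument is the same.
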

\begin{proof}
 By \cite{adv-comp,CD-2022}, as $(\x,y)\not\in\bom$, $\Lambda^{\mu}_t(\x,y)\downarrow 0$ exponentially fast as $t$ increases.
 On the other hand, as $\x\in\mathrm{int}(\X)$ and the density of $\phi$ w.r.t. Lebesgue on $\R^n$ 
is bounded away from zero, $\Lambda^{\phi}_t(\x)^{-1}$ increases with $t$ not faster than $O(t^n)$.
 Therefore by \eqref{th-main-1}, $\Lambda^{\nu_{\x,t}}_t(y)^{-1}$ has to grow exponentially fast with $t$. The same conclusion holds for $\hat{\mu}(dy\,\vert\,\x)$; indeed let $y$ be outside the support $\Y_\x$
 of $\hat{\mu}(dy\,\vert\,\x)$. The density of $\hat{\mu}(dy\,\vert\,\x)$ which reads $y\mapsto f(\x,y)/{\int_\Y f(\x,y)\hat{\mu}(dy\,\vert\,\x)}$ on $\Y_\x$, is bounded away from zero. Therefore $\Lambda^{\hat{\mu}}_t(y)\downarrow 0$ exponentially fast as $t$ increases.
 % (ii) Again by \cite{CD-2022} $\Lambda^{\mu}_t(\x,y)^{-1}$ grows as $O(t^{n+1})$
\end{proof}
So Corollary \ref{outside} states that whenever $\x\in\X$ and $y\not\in\mathrm{supp}(\hat{\mu}(dy\,\vert\,\x))$, then asymptotically the growth rate of $\Lambda^{\nu_{\x,t}}_t(y)^{-1}$ is exponential 
as for the CF of the conditional probability $\hat{\mu}(dy\,\vert\,\x)$. To obtain precise asymptotic results when $(\x,y)\in\bom$, additional conditions on $\mu$ are required. 
Below is such a typical result.
\begin{lemma}(Kro\'o and Lubinsky \cite{Lubinsky})
Let $S\subset\R^n$ be compact and assume that
 there exists a measure $\psi_0$ supported on $S$ such that uniformly on compact subsets of $\mathrm{int}(S)$,  $\lim_{t\to\infty}s_n(t)\Lambda^{\psi_0}_t(\x)=W_0(\x)$ where $W_0$ is continuous and positive on $\mathrm{int}(S)$.  

 If a measure $\psi$ has continuous and positive density $D$ w.r.t. $\psi_0$
 on $\mathrm{int}(S)$, then uniformly on compact subsets of $\mathrm{int}(S)$,
 $\lim_{t\to\infty}s_n(t)\Lambda^{\psi}_t(\x)=D(\x)W_0(\x)$. 
 \end{lemma}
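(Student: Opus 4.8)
The plan is to prove the two one-sided estimates
$\limsup_{t\to\infty} s_n(t)\Lambda^{\psi}_t(\x)\le D(\x)W_0(\x)$ and
$\liminf_{t\to\infty} s_n(t)\Lambda^{\psi}_t(\x)\ge D(\x)W_0(\x)$, the uniformity on a compact $K\subset\mathrm{int}(S)$ coming only at the end from the assumed uniform convergence of $s_n(t)\Lambda^{\psi_0}_t$ and the uniform continuity of $D$ on $K$. Fix $\x\in K$ and $\epsilon>0$, and use continuity and positivity of $D$ to pick a small ball $U\subset\mathrm{int}(S)$ around $\x$ on which $D(\x)-\epsilon\le D\le D(\x)+\epsilon$. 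Three tools drive the argument: (a) the variational characterization \eqref{def-christo-2}, $\Lambda^{\sigma}_t(\x)=\inf\{\int p^2\,d\sigma:\ p\in\R[\x]_t,\ p(\x)=1\}$; (b) monotonicity and homogeneity of $\sigma\mapsto\Lambda^\sigma_t(\x)$ in the measure, both immediate from (a); and (c) a localization principle asserting that the first-order asymptotics of $\Lambda^\sigma_t(\x)$ depend only on $\sigma$ near $\x$, namely $\Lambda^\sigma_t(\x)/\Lambda^{\sigma|_U}_t(\x)\to 1$ for every finite measure $\sigma$ on $S$.

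Granting (c), the proof is short and avoids any circularity, precisely because (c) holds for an arbitrary measure and so may be applied to $\psi$ and to $\psi_0$ without knowing the asymptotics of either in advance. Since $d\psi=D\,d\psi_0$, on $U$ one has the sandwich $(D(\x)-\epsilon)\,\psi_0|_U\le \psi|_U\le (D(\x)+\epsilon)\,\psi_0|_U$ as measures, whence by (b), $(D(\x)-\epsilon)\Lambda^{\psi_0|_U}_t(\x)\le \Lambda^{\psi|_U}_t(\x)\le (D(\x)+\epsilon)\Lambda^{\psi_0|_U}_t(\x)$. Applying (c) to both $\psi_0$ and $\psi$ replaces the restricted Christoffel functions by the genuine ones up to a factor $1+o(1)$, while (c) applied to $\psi_0$ together with the standing hypothesis gives $s_n(t)\Lambda^{\psi_0|_U}_t(\x)\to W_0(\x)$. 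Multiplying the sandwich by $s_n(t)$, letting $t\to\infty$ and then $\epsilon\downarrow 0$, yields $s_n(t)\Lambda^\psi_t(\x)\to D(\x)W_0(\x)$, and tracking constants uniformly over $K$ delivers the uniform statement.

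The heart of the matter, and the step I expect to be the main obstacle, is the localization principle (c). The bound $\Lambda^{\sigma|_U}_t(\x)\le\Lambda^\sigma_t(\x)$ is immediate from (b); the content is the reverse asymptotic inequality $\Lambda^\sigma_t(\x)\le (1+o(1))\Lambda^{\sigma|_U}_t(\x)$, i.e. that the mass of $\sigma$ far from $\x$ is asymptotically irrelevant. I would establish it with fast decreasing polynomials: on the compact $S$ and for the interior point $\x$ there exist polynomials $R$ of degree $\le \eta t$ with $R(\x)=1$, $0\le R\le 1$, and $R\le e^{-c(\eta)t}$ off $U$. Taking a near-extremal $\tilde p$ for $\sigma|_U$ of degree $\lfloor(1-\eta)t\rfloor$ and testing $\Lambda^\sigma_t(\x)$ against $P:=\tilde p\,R\in\R[\x]_t$ with $P(\x)=1$, the contribution of $U$ is at most $\Lambda^{\sigma|_U}_{(1-\eta)t}(\x)$, while the tail $\int_{S\setminus U}P^2\,d\sigma$ is controlled by combining the exponential decay of $R$ with a Bernstein--Walsh growth bound $\sup_S|\tilde p|\le e^{O(t)}$; choosing $c(\eta)$ large enough makes the tail $o(1/s_n(t))$, hence negligible. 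The remaining bookkeeping is the degree change, $s_n(t)/s_n((1-\eta)t)\to (1-\eta)^{-n}$, so one first obtains the estimate up to a factor $(1-\eta)^{-n}$ and then lets $\eta\downarrow 0$. Keeping all of this uniform in $\x$ over $K$ (uniform existence of fast decreasing polynomials and uniform Bernstein--Walsh bounds near an interior point) is the genuinely delicate point, and is exactly where the compactness of $S$ and the regularity of its boundary enter; this is the technical core carried out by Kro\'o and Lubinsky.
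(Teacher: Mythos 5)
First, a remark on the comparison you were asked for: the paper contains \emph{no} proof of this lemma at all --- it is quoted as a known result of Kro\'o and Lubinsky \cite{Lubinsky} --- so your attempt can only be judged on its own merits, not against anything in the text. The outer layer of your argument is sound and is indeed the standard route: the sandwich $(D(\x)-\epsilon)\,\psi_0|_U\le\psi|_U\le(D(\x)+\epsilon)\,\psi_0|_U$, the monotonicity and homogeneity of $\sigma\mapsto\Lambda^{\sigma}_t(\x)$ from the variational formula \eqref{def-christo-2}, and the degree bookkeeping $s_n(t)/s_n(\lfloor(1-\eta)t\rfloor)\to(1-\eta)^{-n}$ are all correct, and they do reduce the lemma to your localization principle (c).

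The genuine gap is in (c) itself, and it is twofold. First, (c) is false in the generality you assert (``for every finite measure $\sigma$ on $S$''): if $\sigma(U)=0$ then $\Lambda^{\sigma|_U}_t(\x)=0$ while $\Lambda^{\sigma}_t(\x)>0$, and more generally a measure that is thin near $\x$ (e.g., $\sigma|_U$ carried by an algebraic subset) defeats the claimed ratio convergence; localization needs a regularity hypothesis on $\sigma$ near $\x$, or global Stahl--Totik type regularity, and this must be threaded through the argument. Second, and more seriously, your proof sketch of (c) is circular at its decisive step. The near-extremal polynomial $\tilde p$ for $\sigma|_U$ is controlled only in $L^2(\sigma|_U)$, whereas the Bernstein--Walsh inequality bounds $\sup_S|\tilde p|$ in terms of a \emph{sup norm} of $\tilde p$ on some compact set of positive capacity; passing from the $L^2(\sigma|_U)$ bound to such a sup bound is a Nikolskii-type inequality that requires exactly the kind of lower bound on $\sigma|_U$ (for instance $\sigma|_U\ge c\cdot\mathrm{Leb}|_U$, or a lower bound on the Christoffel function of the \emph{restricted} measure) that the hypotheses do not provide. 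Attempting to substitute the assumed asymptotics of $\psi_0$ only yields
\[
\tilde p(\y)^2\,\le\,\frac{\int_S\tilde p^2\,d\psi_0}{\Lambda^{\psi_0}_t(\y)}\,,
\]
whose numerator is the global integral --- precisely the quantity localization is supposed to control. You acknowledge that this is ``the technical core carried out by Kro\'o and Lubinsky,'' which is honest, but it means the proposal reduces the lemma to its hardest ingredient rather than proving it: the sandwich, monotonicity, needle polynomials and degree counting are the easy part, and the outsourced step is the theorem. One structural improvement worth noting: for the upper bound on $\Lambda^{\psi}_t(\x)$ you can test directly with $\tilde p R$, where $\tilde p$ is near-extremal for the \emph{global} measure $\psi_0$ (whose asymptotics, hence sup-norm control on compact subsets of $\mathrm{int}(S)$, you do have by hypothesis); this avoids ever invoking localization for $\psi$, so that only localization for $\psi_0$ --- still nontrivial, for the reason above --- is genuinely needed, in the lower bound.
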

 Given a compact set $\mathcal{X}$,
 let $\mathscr{C}(\mathcal{X})$ denote the space of continuous functions on $\mathcal{X}$.
In our context of $\mu$ on a compact set $\bom\subset\X\times\R$ with marginal $\phi$ on $\X$,
we obtain the following result:

\begin{theorem}
\label{th-final}
 Assume that there exists a measure $\mu_0$ on $\bom$ with marginal $\phi_0$ on $\X$
 and conditional $\hat{\mu}_0(dy\,\vert\,\x)$ on $\R$, such that uniformly on compact subsets of $\bom$ (resp. $\X$):
 \[\lim_{t\to\infty} s_{n+1}(t)\,\Lambda^{\mu_0}_t(\x,y)\,=\,W_0(\x,y)\,;\quad
 \lim_{t\to\infty} s_{n}(t)\,\Lambda^{\phi_0}_t(\x)\,=\,W'_0(\x)\,.\]
 In addition assume that the following  Feller-type property holds:
  \[\x\,\mapsto\,\int h(\x,y)\,\hat{\mu}_0(dy\,\vert\,\x)\,\in\,\mathscr{C}(\X)\,\quad\mbox{whenever $h\in \mathscr{C}(\bom)$.}\] 
 Let $\mu$ be a measure on $\bom$ with a continuous and positive density $f$ 
 w.r.t. $\mu_0$. Then, with $\nu_{\x,t}$ being the measure on $\R$  in Theorem \ref{th-main}:
 \begin{equation}
 \label{final}
 \lim_{t\to\infty}\,t\,\Lambda^{\nu_{\x,t}}_t(y)
 \,=\,(n+1)\,\frac{f(\x,y)}{g(\x)}\,\frac{W_0(\x,y)}{W'_0(\x)}\,,\quad\forall (\x,y)\in\mathrm{int}(\bom)\,,
\end{equation}
where $g(\x):=\int f(\x,y)\,\hat{\mu}_0(dy\,\vert\,\x)$. 

Observe that for all $\x\in\mathrm{int}(\X)$, 
$f(\x,y)/g(\x)$ is the density of $\hat{\mu}(dy\,\vert\x)$ w.r.t. $\hat{\mu}_0(dy\,\vert\,\x)$.
 \end{theorem}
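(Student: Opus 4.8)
The plan is to reduce everything to the disintegration of Theorem~\ref{th-main} and then invoke the lemma of Kro\'o and Lubinsky twice, once for $\mu$ on $\bom$ and once for its marginal $\phi$ on $\X$. Starting from \eqref{th-main-1}, for fixed $\x\in\mathrm{int}(\X)$ and $y$ with $(\x,y)\in\mathrm{int}(\bom)$, I would write
\[
t\,\Lambda^{\nu_{\x,t}}_t(y)\,=\,\frac{t\,s_n(t)}{s_{n+1}(t)}\cdot\frac{s_{n+1}(t)\,\Lambda^\mu_t(\x,y)}{s_n(t)\,\Lambda^\phi_t(\x)}\,,
\]
so that the limit splits into three pieces that can be handled separately. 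The purely combinatorial factor is controlled by the identity $s_{n+1}(t)/s_n(t)=(t+n+1)/(n+1)$, whence $t\,s_n(t)/s_{n+1}(t)=t(n+1)/(t+n+1)\to n+1$ as $t\to\infty$; this is exactly where the prefactor $(n+1)$ in \eqref{final} comes from.

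Next I would treat the numerator. Since $\mu$ has continuous and positive density $f$ w.r.t. $\mu_0$ on $\bom$, and $s_{n+1}(t)\,\Lambda^{\mu_0}_t\to W_0$ uniformly on compact subsets of $\bom$ by hypothesis, the Kro\'o--Lubinsky lemma applied in dimension $n+1$ gives $\lim_{t}\,s_{n+1}(t)\,\Lambda^\mu_t(\x,y)=f(\x,y)\,W_0(\x,y)$ on $\mathrm{int}(\bom)$.

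For the denominator the key point is to identify the density of $\phi$ with respect to $\phi_0$. Disintegrating $\mu_0=\hat{\mu}_0(dy\,\vert\,\x)\,\phi_0(d\x)$ and integrating out $y$ against the density $f$ yields $\phi(d\x)=\big(\int f(\x,y)\,\hat{\mu}_0(dy\,\vert\,\x)\big)\,\phi_0(d\x)=g(\x)\,\phi_0(d\x)$, so $g$ is precisely the Radon--Nikodym density of $\phi$ w.r.t. $\phi_0$. The Feller-type hypothesis, applied to $h=f\in\mathscr{C}(\bom)$, guarantees $g\in\mathscr{C}(\X)$, and $f$ being bounded away from $0$ together with $\hat{\mu}_0(\cdot\,\vert\,\x)$ being a probability measure forces $g>0$ on $\mathrm{int}(\X)$. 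Hence the hypotheses of Kro\'o--Lubinsky are met in dimension $n$ and $\lim_{t}\,s_n(t)\,\Lambda^\phi_t(\x)=g(\x)\,W'_0(\x)$. Multiplying the three limits produces $(n+1)\,f(\x,y)W_0(\x,y)\big/\big(g(\x)W'_0(\x)\big)$, which is \eqref{final}.

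The main obstacle is the denominator step: one must correctly recognize $g$ as the density of $\phi$ w.r.t. $\phi_0$ and then verify its continuity and positivity, since these are the two conditions under which Kro\'o--Lubinsky applies. Continuity is exactly what the Feller-type assumption delivers, and positivity relies on $f$ being bounded away from $0$ on $\bom$. A secondary technical point, needed so that the \emph{uniform} Kro\'o--Lubinsky convergence holds at the point of interest, is to ensure that $\x\in\mathrm{int}(\X)$ whenever $(\x,y)\in\mathrm{int}(\bom)$; this is where one uses that $\X=\overline{\mathrm{int}(\X)}$ is the closure of the projection of $\bom$, so that interior points of $\bom$ project into $\mathrm{int}(\X)$.
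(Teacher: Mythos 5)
Your proof is correct and follows essentially the same route as the paper's: both identify $\phi(d\x)=g(\x)\,\phi_0(d\x)$ via the disintegration of $\mu_0$, use the Feller-type hypothesis to get continuity (and positivity) of $g$, apply the Kro\'o--Lubinsky lemma in dimensions $n+1$ and $n$, and extract the prefactor $(n+1)$ from the ratio $s_{n+1}(t)/s_n(t)$ combined with Theorem \ref{th-main}. The only difference is presentational: you solve \eqref{th-main-1} for $t\,\Lambda^{\nu_{\x,t}}_t(y)$ and take the limit of the quotient, whereas the paper equates $\lim_t s_{n+1}(t)\Lambda^\mu_t$ with the limit of the factored right-hand side and then reads off \eqref{final}.
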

\begin{proof}
Disintegrating $\mu_0$ yields $d\mu_0(\x,y)=\hat{\mu}_0(dy\,\vert\,\x)\,\phi_0(d\x)$.
Therefore
\[d\mu(\x,y)\,=\,\frac{f(\x,y)\hat{\mu}_0(dy\,\vert\,\x)}{g(\x)}\,g(\x)\,\phi_0(d\x)\,;\quad 
g(\x)\,:=\,\int f(\x,y)\,\hat{\mu}_0(dy\,\vert\,\x)\,,\]
and $\phi(d\x)\,=\,g(\x)\,\phi_0(d\x)$. Moreover observe that for every $\x\in\X$,
\[\hat{\mu}(dy\,\vert\,\x)\,=\,\frac{f(\x,y)}{g(\x)}\,\hat{\mu}_0(dy\,\vert\,\x)\,.\]
That is, for every $\x\in\X$,  $y\,\mapsto f(\x,y)/g(\x)$ 
is the density of $\hat{\mu}(dy\,\vert\,\x)$ w.r.t. $\hat{\mu}_0(dy\,\vert\,\x)$,
and by the Feller-like property, $g$ is continuous and positive on $\X$.  Next, by our hypotheses and from Theorem \ref{th-main}, 
 \begin{eqnarray*}
 f(\x,y)\,W_0(\x,y)&=&\lim_{t\to\infty}s_{n+1}(t)\Lambda^{\mu}_t(\x,y)\,=\,
 \lim_{t\to\infty}\,\left[s_n(t)\,(\,\Lambda^{\phi}_t(\x)\cdot\frac{s_{n+1}(t)}{s_n(t)}\Lambda^{\nu_{\x,t}}_t(y)\,)\,\right]\\
 &=&g(\x)\,W'_0(\x)\,\lim_{t\to\infty}\frac{s_{n+1}(t)}{s_n(t)}\Lambda^{\nu_{\x,t}}_t(y)\,)\,=\,\frac{g(\x)\,W'_0(\x)}{n+1}\lim_{t\to\infty}t\,\Lambda^{\nu_{\x,t}}_t(y)\,,
 \end{eqnarray*}
 for all 
 $(\x,y)\in\mathrm{int}(\bom)$, which yields \eqref{final}.
\end{proof}
So for instance with $n=1=p$, let $d\mu_0(x,y)=1_{[-1,1]}(x)\,1_{[a(x),b(x)]}(y)\,dx\,dy$
where $x\mapsto a(x)$ and $x\mapsto b(x)$ are strictly positive and continuous on $[-1,1]$, and
$b(x)-a(x)>0$ on $\X=[-1,1]$. Then \eqref{final} reads
\[\lim_{t\to\infty}\,t\,\Lambda^{\nu_{\x,t}}_t(y)
 \,=\,\frac{f(x,y)}{\int_{a(x)}^{b(x)}f(x,y)dy}\,\frac{2\,W_0(x,y)}{W'_0(x)}\,,\quad\forall (x,y)\in\mathrm{int}(\bom)\,,\]
 and $f(x,y)/\int_{a(x)}^{b(x)}f(x,y)dy$ is the density of $\hat{\mu}(dy\,\vert\,x)$ w.r.t. Lebesgue on 
 the interval $[a(x),b(x)]$, for every $\x\in [-1,1]$. 
 
 As expected from the disintegration  \eqref{th-main-1}, convergence  of $t\Lambda^{\nu_{x,t}}_t(y)$ as $t$ increases, is towards
the density of the conditional $\hat{\mu}(dy\,\vert\,x)$ times a weight function intrinsic to
the support $\bom$ of $\mu$, which is typical of convergence results for Christoffel functions
(whenever convergence takes place).

\section{Conclusion}
We have shown that in quite general setup, the Christoffel function disintegrates (or factorizes)
and mimics the disintegration of its associated measure on $\X\times \Y$ into
its marginal on $\X$ and its conditional on $\Y$, given $\x\in\X$. The result uses
a straightforward (but novel) interpretation of a well-known intermediate result of convex optimization,
which is of interest in its own. 
Namely that every SOS polynomial is the reciprocal 
of the Christoffel function associated with some linear functional
(which always has a representing measure in the univariate case). 
A similar interpretation is valid for  the cone of polynomials that are positive on a basic semi-algebraic set.

We think that a better understanding of the linear functional
$\nu_{\x,t}$ (which has  a representing measure when $p=1$) is needed. 
In particular, further investigation beyond the scope of the present note,
could consider a more detailed (and non-asymptotic) 
comparison of $\nu_{\x,t}$ with the conditional $\hat{\mu}(dy\,\vert\,\x)$ when $\x\in\X$, 
as well as understanding its meaning when $p>1$, i.e., when it may not have a representing measure.
For instance we conjecture (but have been unable to prove) that $\nu_{\x,t}$ does \emph{not} depend on $t$,
and has a representing measure.


\begin{thebibliography}{las}
\bibitem{cvx}
Grant M., Boyd S. 
\newblock\emph{CVX: Matlab Software for Disciplined Convex Programming, version 2.1},
\newblock {\tt http://cvxr.com/cvx}, March 2014.
\bibitem{annals-prob}
Helton J.W., Lasserre J.B., Putinar M. \emph{Measures with zeros in the inverse of their moment matrix},
Annals Prob. {\bf 36}, pp. 1453--1471, 2008.
 \bibitem{Lubinsky}
Kro\'{o} A., Lubinsky D.~S. \emph{Christofffel functions and universality in the bulk for multivariate polynomials},
Canad. J. Math. {\bf 65}(3),
pp. 600--620, 2013.
\bibitem{neurips}
Lasserre J.~B., Pauwels E.
\emph{Sorting out typicality via the inverse moment matrix {S}{O}{S} polynomial},
in \emph{Advances in Neural Information Processing Systems},
D.D. Lee, M. Sugiyama, U.V. Luxburg, I. Guyon and R. Garnett Eds., 
Curran Associates, Inc., pp. 190--198, 2016.
\bibitem{CD-2022}
Lasserre J.~B., Pauwels E., Putinar M. \emph{The Christoffel-Darboux Kernel for Data Analysis},
Cambridge Monographs on Applied and Computational Mathematics,
Cambridge University Press, Cambridge, UK, 2022.
\bibitem{adv-comp}
Lasserre J.B, Pauwels E. \emph{The empirical Christoffel  function with applications in data analysis},
Adv. Comput. Math. {\bf  45}, pp. 1439--1468, 2019.
\bibitem{CUP}
Lasserre J.B. \emph{Introduction to Polynomial and Semi-Algebraic Optimization},
 Cambridge University Press, Cambridge, UK, 2015.

\bibitem{nesterov}
Nesterov Y. \emph{Squared functional systems and optimization problems},
in: \emph{High Performance Optimization}, H. Frenk, K. Roos ,T. Terlaky and Shuzong  Zhang (Eds.),
vol 23, Applied Optimization series, Springer, Boston, MA, 2000, pp. 405--440.


\bibitem{FoCM}
Pauwels E., Putinar M., Lasserre J.~B.
\emph{Data analysis from empirical moments and the Christoffel function},
Found. Comput. Math. {\bf 21}, pp. 243--273, 2021.


\end{thebibliography}
\end{document}